\documentclass[12pt,leqno,a4paper]{amsart}
\usepackage{amssymb,enumerate}
\newtheorem{thm}{Theorem}[section]
\newtheorem{lem}[thm]{Lemma}
\newtheorem{cor}[thm]{Corollary}

\newtheorem{defi}[thm]{Definition}

\begin{document}

\title{Non-solvable groups whose non-linear character degrees have the same number of different prime divisors}
\date{\today}
\author[J.Y. Guo, Y.J. Liu, Z.Y. Wu and D. Xiao]{Junying Guo, Yanjun Liu, Ziyi Wu and Di Xiao}
\address{School of Mathematics and Statistics,
   Jiangxi Normal University, Nanchang, 330022, China}
\thanks{
The first author was supported by the National Natural Science Foundation of China
(11761034) and the  Natural Science Foundation of Jiangxi Province (20232BAB202012),
and the second by the  NSFC (12171211).}

\keywords{Non-solvable group, character degree, prime divisor}

\subjclass[2010]{20C15, 20D05}

\begin{abstract} By a result of Noritzsch, a finite solvable group whose non-linear character degrees have the same set of prime divisors is meta-abelian.
In this note we investigate finite non-solvable groups whose non-linear character degrees have the same number of different prime divisors,
and show that up to an abelian direct factor, such groups are exactly
$L_2(4), L_2(8), A_7, S_7$, the central product of a cyclic $3$-group with $3.A_7$,
or the semi-direct product of $A_7$ by a cyclic $2$-group $\langle a\rangle$ such that
$a$ non-trivially acts on $A_7$ by conjugation.
As consequence, we show that only the primes $2,3,5,7$ may occur as prime divisors of their irreducible character degrees, and
that Huppert's $\rho$-$\sigma$ conjecture holds for them.
\end{abstract}

\maketitle

\pagestyle{myheadings}
\markboth{Guo, Liu, Wu and Xiao}{Non-solvable groups whose character degrees have the same number of different prime divisors}

\section{Introduction}

The study of character degrees is an interesting topic and has a long history
 in the representation theory of finite groups. It was Isaacs and Passman
 who first started the project on the structure of finite groups all of whose non-linear
complex irreducible characters have prime degrees, see \cite{IP68, IP65}.
Along the line of research, O. Manz dealt with finite groups all of whose non-linear
complex irreducible characters have prime power degrees
and classified such non-solvable groups, see \cite{Man85,Man85-}.


\vspace{1ex}

As a further step, we investigate finite groups
all of whose non-linear complex irreducible characters have the same number of different prime divisors.
Relevantly, Noritzsch studied finite solvable groups
all of whose non-linear complex irreducible characters have the same set of prime divisors
and showed that such groups are meta-abelian, see \cite[Proposition 27.9]{H} or \cite{N95}.
In this note, we focus on finite non-solvable groups.

\vspace{1ex}

Let $G$ be a finite group, ${\rm Irr}(G)$  the set of complex irreducible characters of $G$,
and ${\rm cd}(G)$ the degree set of ${\rm Irr}(G)$.
For a positive integer $n$, we denote by $\pi(n)$ the set of prime divisors of $n$
and by $|\pi(n)|$ the number of prime divisors of $n$.

 \begin{defi} \label{def:SNPD-group}
  A finite group $G$ is called a SNPD-group if for all $1\neq \chi(1)\in {\rm cd}(G)$,
 $\chi(1)$ has the same number of different prime divisors.
\end{defi}

We first classify almost simple  SNPD-groups.

\begin{thm} \label{thm:simple-main} Let $G$ be an almost simple group.
Then $G$ is a SNPD-group if and only if $G$ is isomorphic to $L_2(4)$, $L_2(8)$,  $A_7$ or $S_7$.
\end{thm}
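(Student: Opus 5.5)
The ``if'' direction is a direct check against the known character tables (ATLAS). We have ${\rm cd}(L_2(4))=\{1,3,4,5\}$ and ${\rm cd}(L_2(8))=\{1,7,8,9\}$, so every non-linear degree is a prime power. For $A_7$ the degrees are $1,6,10,14,14,15,21,35$, and for $S_7$ they are $1,6,14,14,15,20,21,35,\ldots$. In both cases every non-linear degree has exactly two distinct prime divisors. So in each of the four groups all non-linear degrees have the same number of prime divisors.

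For the ``only if'' direction, let $S\le G\le {\rm Aut}(S)$ with $S$ non-abelian simple, and suppose $G$ is SNPD. The first tool is the following dichotomy. If $G$ has an irreducible character of prime power degree $>1$, then \emph{all} non-linear degrees are prime powers. Manz's classification \cite{Man85,Man85-} of non-solvable groups with this property, restricted to almost simple groups, then leaves $L_2(4)\cong L_2(5)$ and $L_2(8)$. One must check that the other candidates occurring in Manz's list are not SNPD in the present sense, or do not occur at all. Otherwise every non-linear degree of $G$ has $k\ge 2$ prime divisors. In this second case I would compare two kinds of degrees:
\begin{itemize}
\item a degree divisible by a large number of primes, such as the Steinberg-type degree or a degree built from a regular semisimple character;
\item a degree with few primes, such as a unipotent degree or the degree of a constituent over a Weil-type character of $S$.
\end{itemize}
The ingredient I need here is that for $\chi\in{\rm Irr}(G)$ lying over $\theta\in{\rm Irr}(S)$, the ratio $\chi(1)/\theta(1)$ divides $|G:S|$. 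This controls how the prime divisors change from $S$ to $G$.

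I then run through the classification of finite simple groups.

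\emph{Alternating groups $A_n$, $n\ge 7$.} Use the hook-length formula. The degree $n-1$ (with its $S_n$-extension) has $|\pi(n-1)|$ primes. The degree $n(n-3)/2$ and the degree $\binom{n-1}{2}$ give further values. For $n\ge 8$ one exhibits two partitions whose degrees have different numbers of primes; a convenient choice is $n-1$ versus $(n-1)(n-2)/2$ or $n(n-3)/2$, and small exceptions are handled by hand. For $n=7$, the groups $A_7$ and $S_7$ survive. The case $A_6\cong L_2(9)$ fails because its degrees $5,8,9,10$ mix one and two primes, and its automorphic extensions fail similarly.

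\emph{Sporadic groups and ${}^2F_4(2)'$.} Inspect the ATLAS; in each case there is a prime-power degree alongside a degree with several primes.

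\emph{Groups of Lie type in characteristic $p$.} The Steinberg character extends to ${\rm Aut}(S)$ and has degree a power of $p$. So $G$ has a prime-power degree and we are back in Manz's case, which leaves only $L_2(4)$ and $L_2(8)$. For $L_2(q)$ with $q$ odd, the Steinberg degree is still a prime power, and this suffices to reduce to Manz as well.

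I expect the main obstacle to be the bookkeeping for almost simple groups with outer automorphisms. There, the Steinberg-extension argument is clean, but the alternating groups require verifying that the chosen hook degrees remain irreducible, or change only by a factor of $2$, in $S_n$. That factor of $2$ could alter the prime count, so the two comparison degrees must be chosen so that both are already odd-affected in the same way, or else handled via explicit $S_n$ degrees. For small $n$ (say $7\le n\le 12$) I would simply check tables.
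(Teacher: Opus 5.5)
Your Lie-type and sporadic cases follow the paper's own route. In the Lie-type case the Steinberg character extends to ${\rm Aut}(S)$ and has prime-power degree, so the Manz/Willems classification leaves only $L_2(4)$ and $L_2(8)$. In the sporadic case you inspect tables. Your stated reason for the sporadic case is inaccurate, since not every sporadic group has a non-linear prime-power degree: $J_1$ has degrees $56,76,77,120,133,209$. The check still succeeds there, because $56=2^3\cdot 7$ and $120=2^3\cdot 3\cdot 5$ have different numbers of prime divisors. Your ``second case'' comparison of Steinberg-type and unipotent degrees is never needed.

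\textbf{The gap is in the alternating case.} This is the only part of the theorem needing a real argument. Your proposed comparison of $n-1$ with $\binom{n-1}{2}$ or $n(n-3)/2$ does not work for all $n\ge 8$, and the failures are not confined to $n\le 12$. Comparing $n-1$ with $\binom{n-1}{2}$ only forces $n\equiv 3\pmod 4$ with $n-2$ a prime power. Among such $n$, the degree $n(n-3)/2$ often has the same number of prime divisors as $n-1$:
\begin{itemize}
\item at $n=19$ the three degrees are $18=2\cdot 3^2$, $153=3^2\cdot 17$, $152=2^3\cdot 19$, each with two primes;
\item $n=27$ gives $26,325,324$, each with two primes;
\item $n=31$ gives $30,435,434$, each with three primes;
\item $n=43$ gives $42,861,860$, each with three primes.
\end{itemize}
You give no argument that such exceptions are finitely many, so handling ``small exceptions by hand'' does not close the case.

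\textbf{The missing step.} You need further degrees together with a case split modulo $3$. The paper uses $d_3=\binom{n-1}{3}$, from the partition $(n-3,1^3)$, and $d_4=n(n-1)(n-5)/6$, from $(n-3,3)$.
\begin{itemize}
\item If $3\mid n-1$, write $d_4=\frac{n-1}{3}\cdot n\cdot\frac{n-5}{2}$. Here $\frac{n-1}{3}$ is coprime to the other two factors, and $n\cdot\frac{n-5}{2}$ is not a prime power, since $n-2\cdot\frac{n-5}{2}=5$. Hence $|\pi(d_4)|\ge |\pi(n-1)|-1+2>|\pi(n-1)|$.
\item If $3\mid n-2$, then $n-2$ is a power of $3$. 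So $d_3=(n-1)\cdot\frac{n-2}{3}\cdot\frac{n-3}{2}$ gains the prime $3$ over $n-1$.
\item If $3\mid n-3$, then $d_3=(n-1)(n-2)\cdot\frac{n-3}{6}$ gains the primes of $n-2$.
\end{itemize}
All the partitions used are non-self-conjugate for $n>7$. Their degrees therefore lie in both ${\rm cd}(A_n)$ and ${\rm cd}(S_n)$, so your concern about a factor of $2$ when passing to $S_n$ does not arise.
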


In general, we have

\begin{thm} \label{thm:non-solvable-main} Let $G$ be a finite non-solvable group. Then
$G$ is a SNPD-group if and only if $G\cong A\times B$,
where $A$ is an abelian group and $B$ satisfies one of the following:
\begin{enumerate}
  \item[$(1)$]  $B=L_2(4), L_2(8), A_7$ or $S_7$.
  \item[$(2)$] $B$ is  the central product of a cyclic $3$-group with $3.A_7$.
  \item[$(3)$] $B$ is the semi-direct product of $A_7$ by a cyclic $2$-group $\langle a\rangle$ such that
$B/\langle a^2\rangle \cong S_7$.
\end{enumerate}
\end{thm}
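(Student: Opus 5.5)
The plan is to reduce to a non-abelian chief factor, apply Theorem~\ref{thm:simple-main} to an almost simple section, and then pin down the rest of $G$ by Clifford theory and Gallagher's theorem.

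\emph{Sufficiency.} This direction is essentially a character-table check. Since $\mathrm{cd}(A\times B)=\mathrm{cd}(B)$ for abelian $A$, it suffices to verify the groups $B$ in (1)--(3).
\begin{itemize}
\item For (1), Theorem~\ref{thm:simple-main} applies directly.
\item For (2), every irreducible character of the central product $C_{3^k}\circ 3.A_7$ restricts irreducibly to $3.A_7$. Hence its degree set is $\mathrm{cd}(3.A_7)$, and one reads off from the Atlas that the faithful degrees $6,15,21,24$ have two prime divisors. This matches the non-linear degrees of $A_7$, namely $6,10,14,15,21,35$.
\item For (3), write $B=A_7\rtimes\langle a\rangle$ with $a$ inducing the transposition automorphism. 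Then $\langle a^2\rangle$ is central and $B/\langle a^2\rangle\cong S_7$. Every $\theta\in\mathrm{Irr}(A_7)$ either extends to its inertia group or induces to $B$ with degree $2\theta(1)$. Gallagher's theorem then shows that $\mathrm{cd}(B)\subseteq \mathrm{cd}(A_7)\cup 2\,\mathrm{cd}(A_7)$, and that the new degrees come only from the pairs of $A_7$-characters fused by $a$. These are the two characters of degree $10$ and the two of degree $14$, which are fused in $S_7$ and give degrees $20$ and $28$. One checks that all resulting non-linear degrees have exactly two prime divisors.
\end{itemize}

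\emph{Necessity, locating the simple section.} Let $G$ be a non-solvable SNPD-group and let $R$ be its solvable radical. Choose a minimal normal subgroup $N/R$ of $G/R$. Then $N/R\cong S^t$ with $S$ non-abelian simple.
\begin{itemize}
\item First I would show $t=1$. If $t\ge 2$, products $\theta_1\times\cdots\times\theta_t$ of characters of $S$ produce degrees whose numbers of prime divisors differ. For example, take $\theta\times 1\times\cdots$ and $\theta\times\theta\times\cdots$ for a suitable $\theta$ chosen using Theorem~\ref{thm:simple-main}. One then passes to $G/R$ via Clifford theory, since $G/R$ permutes the factors and degrees only get multiplied by orbit lengths.
\item Next, $G/C_G(N/R)$ is almost simple, and it is an SNPD-group because it is a quotient of $G$. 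Theorem~\ref{thm:simple-main} therefore gives $S\in\{L_2(4),L_2(8),A_7\}$ with $G/C_G(N/R)$ one of the listed groups. In each case the non-linear degrees have exactly $k$ prime divisors, where $k=1$ for $L_2(4)$ and $L_2(8)$, and $k=2$ for $A_7$ and $S_7$.
\item Now let $C=C_G(N/R)$. If $C>R$, a character of $G/R$ lying over a non-trivial character of $C/R$ times a character of $N/R$ yields a degree with too many primes. Hence $G/R$ is almost simple.
\end{itemize}

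\emph{Necessity, analysing the radical.} The remaining work is to show $R$ is abelian and sits as described.
\begin{itemize}
\item For $\lambda\in\mathrm{Irr}(R)$, the degrees of characters of $G$ over $\lambda$ are $\lambda(1)\cdot|G:I_G(\lambda)|\cdot\beta(1)$. Here $\beta$ ranges over projective characters of $I_G(\lambda)/R$, and $\beta$ may be taken of degree $1$ only in special cases.
\item If $\lambda(1)>1$, or if $\lambda$ is not $G$-invariant, then multiplying by the non-linear degrees of $I/R$, which already use $k$ primes, tends to force more than $k$ primes. I expect this forces every $\lambda\in\mathrm{Irr}(R)$ to be linear and $G$-invariant. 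Consequently $R$ is abelian and $[R,G]\le$ the centre, and in fact $[G,R]=1$ since $G$ acts trivially on $\mathrm{Irr}(R)$.
\item So $R\le Z(G)$ and $G$ is a central extension of an almost simple group from the list. Using Schur multipliers, $L_2(4)$ has multiplier of order $2$, giving $2.A_5=SL_2(5)$ with the degree $6$ bad. $L_2(8)$ has trivial multiplier. $A_7$ has multiplier of order $6$; here $2.A_7$ contributes the degree $4$, which is bad, while $3.A_7$ survives.
\item One then reconstructs $G$. It is $R\circ G'$, where $G'$ is one of $L_2(4)$, $L_2(8)$, $A_7$ or $3.A_7$, except in the $S_7$ case. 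There $G/R\cong S_7$, $G'=A_7$ (the triple cover $3.S_7$ must be excluded by checking its degrees), and an element $a$ mapping to a transposition generates, modulo an abelian direct factor, a cyclic $2$-group. The part of order prime to $2$ of $\langle a\rangle$ splits off, giving (3). For $3.A_7$ the $3$-part of $R$ fuses with the centre, giving (2).
\end{itemize}

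The main obstacle is the radical step. The claim that every character of $R$ is $G$-invariant and linear needs care, since projective degrees of $I/R$ over a non-trivial cocycle are uncontrolled, so inertia subgroups of $A_7$ and $S_7$ must be handled case by case. I would try first to exploit that some element of $\mathrm{cd}(G/R)$ is coprime to any given orbit length, using the index lists of maximal subgroups.
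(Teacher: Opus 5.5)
\textbf{The radical step fails, and the statement itself is false.} The step you flag as the main obstacle cannot be completed, because its key claim is false: not every $\lambda\in\mathrm{Irr}(R)$ need be linear. Here is a counterexample to the statement.
\begin{itemize}
\item Take $E=3^{1+2}_+$ with $Z(E)=\langle z\rangle$, let $Z(3.A_7)=\langle c\rangle$, and let $G=(E\times 3.A_7)/\langle (z,c^{-1})\rangle$ be the central product.
\item The irreducible characters of $G$ are the $\psi\times\varphi$ whose central characters agree at $z$ and $c$. The nine linear characters of $E$ pair with $\mathrm{Irr}(A_7)$. The two characters of $E$ of degree $3$ pair with the faithful characters of $3.A_7$, whose degrees are $6,15,21,24$. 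As a check, $9\cdot 2520+2\cdot 9\cdot 2520=|G|$.
\item Hence
\[
\mathrm{cd}(G)=\{1,6,10,14,15,21,35\}\cup\{18,45,63,72\},
\]
and every non-linear degree has exactly two prime divisors. So $G$ is a non-solvable SNPD-group.
\item However, $18\notin\mathrm{cd}(B)$ for every $B$ in (1)--(3), and the solvable radical of $G$ is the non-abelian group $E$. So $G$ is not of the form $A\times B$.
\end{itemize}
In your notation, the degree-$3$ characters $\lambda$ of $R=E$ are $G$-invariant and carry the cocycle of $3.A_7$. Then $\mathrm{cd}(G\mid\lambda)=\lambda(1)\cdot\{6,15,21,24\}$ has exactly two prime divisors whenever $\lambda(1)$ is a power of $3$, so no degree counting can force $\lambda(1)=1$. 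The paper's proof has the same hole. In Lemma~\ref{lem:A7}, the sentence ``It is clear that all irreducible characters of $N$ have degree $1$'' is unjustified, since its own set $\{2\cdot 3,3\cdot 5,3\cdot 7,2^3\cdot 3\}\cdot\theta(1)$ only forces $\pi(\theta(1))\subseteq\{3\}$. The $A_7$ case needs an enlarged conclusion, not a cleverer proof.

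\textbf{The $t=1$ step does not work as written.} The characters $\theta\times 1\times\cdots\times 1$ and $\theta\times\theta\times 1\times\cdots$ have degrees $\theta(1)$ and $\theta(1)^2$, which have the same prime set. Clifford theory only says that characters of $G$ above them have degrees divisible by these numbers, times an orbit length and a projective degree; that can only add primes. To show $G$ is not SNPD you need one degree whose prime set you know exactly. The paper gets this from Lemma~\ref{lem:extend}: if $\sigma$ extends to $\mathrm{Aut}(S)$, then $\sigma(1)^t\in\mathrm{cd}(G)$. For $S=A_7$ this gives $6^t$, against a character above $6\times 35\times 1\times\cdots$ whose degree is divisible by $2\cdot 3\cdot 5\cdot 7$.

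Also, before $t=1$ is known there is no almost simple quotient of $G$, so Theorem~\ref{thm:simple-main} cannot be used to choose $\theta$. That is why the paper treats sporadic, Lie type and alternating $S$ separately for arbitrary $t$. In the Lie type case it uses the Steinberg character and then the classification of groups with prime-power degrees.

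Once $t=1$ is established, your application of Theorem~\ref{thm:simple-main} to $G/C_G(N/R)$ is fine. So is your argument that $C_G(N/R)=R$, provided you take a non-linear character of the non-solvable group $C_G(N/R)/R$.

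\textbf{A minor slip in (3).} The two degree-$14$ characters of $A_7$ come from the non-self-conjugate partitions $(5,2)$ and $(4,3)$, so they are not fused. Only the two $10$'s fuse. Hence $\mathrm{cd}(B)=\mathrm{cd}(S_7)$ and $28$ does not occur, which is harmless to your conclusion.
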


Let
$$
\sigma(G)=\max \{|\pi(\chi(1))|: \chi \in \operatorname{Irr}(G)\}
$$
and
$$
\rho(G)=\{p \text { prime}: p \mid \chi(1) \text { for some } \chi \in \operatorname{Irr}(G)\} .
$$

Huppert conjectured that $|\rho(G)|$ can be bounded in terms of $\sigma(G)$ and, if $G$ is solvable, then $|\rho(G)| \leq 2 \sigma(G)$.
Nowadays it is often called Huppert's $\rho$-$\sigma$ conjecture, and is a problem of central importance in character theory.
According to Theorem \ref{thm:non-solvable-main} and its proof, we have

\begin{cor} \label{cor:number}
Let $G$ be a finite non-solvable SNPD-group. Then $\rho(G)\subseteq \{2,3,5,7\}$, and
$\sigma(G)=1$ or 2.

In particular, Huppert's $\rho$-$\sigma$ conjecture holds for non-solvable SNPD-groups.
\end{cor}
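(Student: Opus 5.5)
The corollary follows from Theorem \ref{thm:non-solvable-main} once we compute $\mathrm{cd}(B)$ for each group $B$ on the list. Write $G\cong A\times B$ with $A$ abelian. Then every $\chi\in{\rm Irr}(G)$ has the form $\lambda\times\psi$ with $\lambda$ linear and $\psi\in{\rm Irr}(B)$, so ${\rm cd}(G)={\rm cd}(B)$. Hence $\rho(G)=\rho(B)$ and $\sigma(G)=\sigma(B)$, and it suffices to treat each $B$ in cases (1)--(3).

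\emph{Case (1).} We read off the degree sets from the ATLAS:
\begin{itemize}
\item ${\rm cd}(L_2(4))=\{1,3,4,5\}$;
\item ${\rm cd}(L_2(8))=\{1,7,8,9\}$;
\item ${\rm cd}(A_7)=\{1,6,10,14,15,21,35\}$;
\item ${\rm cd}(S_7)=\{1,6,14,15,20,21,35,70\}$ (the characters of $S_7$ are restrictions or inductions from $A_7$).
\end{itemize}
For $L_2(4)$ and $L_2(8)$ every non-linear degree is a prime power, so $\sigma=1$. For $A_7$ and $S_7$ every non-linear degree has exactly two prime divisors, so $\sigma=2$. In all four groups the primes involved lie in $\{2,3,5,7\}$.

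\emph{Case (2).} Let $B=C\circ 3.A_7$ with $C$ a cyclic $3$-group. Every irreducible character of $B$ restricts irreducibly to $3.A_7$ up to the central identification, so ${\rm cd}(B)={\rm cd}(3.A_7)$. The faithful characters of $3.A_7$ have degrees $6,15,21,24$. Together with ${\rm cd}(A_7)$ this gives $\{1,6,10,14,15,21,24,35\}$, so $\rho(B)=\{2,3,5,7\}$ and $\sigma(B)=2$.

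\emph{Case (3).} Here $B=A_7\rtimes\langle a\rangle$ and $N=A_7\times\langle a^2\rangle$ is a normal subgroup of index $2$ with $B/N\cong C_2$. Since ${\rm Irr}(A_7)$ is invariant under ${\rm Aut}(A_7)$ except for the pair of degree $21$, Clifford theory applies:
\begin{itemize}
\item an invariant character extends, so its degrees lie in ${\rm cd}(A_7)$;
\item the degree $21$ characters induce to degree $42$, or their products with linear characters of $\langle a^2\rangle$ induce to degree $42$.
\end{itemize}
This is exactly the computation carried out in the proof of Theorem \ref{thm:non-solvable-main}, and it gives ${\rm cd}(B)\subseteq\{1,6,14,15,20,21,35,42,70\}$. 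Every non-linear degree in this set has exactly two prime divisors from $\{2,3,5,7\}$.

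\emph{Conclusion.} In all cases $\rho(G)\subseteq\{2,3,5,7\}$ and $\sigma(G)\in\{1,2\}$. For Huppert's $\rho$-$\sigma$ conjecture, note that $|\rho(G)|\le 4\le 2\sigma(G)$ whenever $\sigma(G)=2$. When $\sigma(G)=1$ we are in case (1) with $B=L_2(4)$ or $L_2(8)$, where $|\rho(G)|=3$. So in every case $|\rho(G)|$ is bounded by a function of $\sigma(G)$, and the bound $|\rho(G)|\le 2\sigma(G)$ fails only in the $\sigma=1$ case, where it is not required since $G$ is non-solvable.

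The main obstacle is case (3), where the degrees must be tracked carefully. This is handled by the Clifford analysis from the main proof rather than by any new argument.
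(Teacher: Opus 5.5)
\textbf{There is a genuine error in your case (3), and in your $S_7$ entry in case (1).}

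Your overall route matches the paper's. You read the corollary off the list in Theorem~\ref{thm:non-solvable-main} via ${\rm cd}(A\times B)={\rm cd}(B)$. Your treatment of $L_2(4)$, $L_2(8)$, $A_7$ and of case (2) is fine, and so is the Huppert part. The degree computations for $S_7$ and for case (3), however, are wrong in a way that breaks $\sigma\le 2$ as you wrote it.

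\begin{itemize}
\item The ${\rm Aut}(A_7)$-non-invariant characters of $A_7$ are not of degree $21$. $A_7$ has only one character of degree $21$, coming from the conjugate pair $(3,3,1),(3,2,2)$. The non-invariant pair is the two characters of degree $10$ obtained by restricting $[4,1^3]$, the only self-conjugate partition of $7$.
\item Hence the non-invariant $\psi\times\mu\in{\rm Irr}(A_7\times\langle a^2\rangle)$ induce to degree $20$, not $42$. This gives ${\rm cd}(B)={\rm cd}(S_7)=\{1,6,14,15,20,21,35\}$, exactly as the paper states.
\item $70\notin{\rm cd}(S_7)$, since the largest degree of $S_7$ is $35$. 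So your case (1) list for $S_7$ is also wrong.
\item Your final assertion that every non-linear element of $\{1,6,14,15,20,21,35,42,70\}$ has exactly two prime divisors is false: $42=2\cdot 3\cdot 7$ and $70=2\cdot 5\cdot 7$. From the containment you derived you could only conclude $\sigma(B)\le 3$. Worse, if $42$ or $70$ really were degrees, $B$ would not be SNPD, contradicting the theorem you are invoking.
\end{itemize}

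You can repair this in either of two ways. One is to redo the Clifford analysis correctly as above. The simpler one avoids computing full degree sets altogether and uses the hypothesis directly:
\begin{itemize}
\item Since $G$ is SNPD, $\sigma(G)=|\pi(d)|$ for any single non-linear $d\in{\rm cd}(G)={\rm cd}(B)$.
\item Take $d=3$ for $L_2(4)$ and $d=7$ for $L_2(8)$, giving $\sigma=1$.
\item In every remaining case take $d=6$. It is a degree of the quotient $A_7$ (namely $B/C$ in case (2)) or $S_7$ ($B/\langle a^2\rangle$ in case (3)), giving $\sigma=2$.
\item Character degrees divide $|B|$, so $\rho(G)=\rho(B)\subseteq\pi(|B|)$. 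Here $|B|$ is $60$, $504$, or $|A_7|$ times a power of $2$ or $3$, so $\rho(G)\subseteq\{2,3,5,7\}$.
\end{itemize}
With either repair, your Huppert paragraph then goes through as written.
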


\section{Proof of Theorems}

We first prove Theorem \ref{thm:simple-main}.


\begin{proof}[Proof of Theorem \ref{thm:simple-main}] Notice that $A_5\cong L_2(4)\cong L_2(5)$.
The ``if" part is clear by Table \ref{tab:small}, where character degrees of
$L_2(4)$, $L_2(8)$, $A_7$  and $S_7$ are listed.

\begin{table}
\begin{center}
\caption{Degrees of $L_2(4)$, $L_2(8)$, $A_7$  and $S_7$} \label{tab:small}
\begin{tabular}{ll} \hline
  $S$ &  ${\rm cd}(S)$  \\ \hline
  $L_2(4)$ & $\{1,3,4,5\}$  \\
  $L_2(8)$ &  \{1,7,8,9\}  \\
  $A_7$  & $\{1,6,10,14,15,21,35\}$ \\
  $S_7$  & $\{1,6,20,14,15,21,35\}$ \\  \hline
\end{tabular}
\end{center}
\end{table}

%
%
\vspace{1ex}

For the ``only if" part, we
let $S$ be the socle of $G$ and  note that, by the Classification Theorem of Finite Simple Groups, $S$ is
an alternating group, a simple group of Lie type,
a sporadic simple group or the Tits simple group.
It is easy to check by GAP \cite{GAP} that sporadic simple groups and the Tits simple group are not a SNPD-group (see also Table \ref{degrees sporadic}).

\vspace{1ex}

Let $S$ be a simple group of Lie type. Then $S$ has the Steinberg character, whose degree is a prime power and which extends to $G$ (see \cite{Sch85,Sch92}).
So, if $G$ is a SNPD-group then all complex irreducible characters of $G$ have prime power degrees.
By \cite[Proposition B]{Wi87}, $G$ is isomorphic to $L_2(4)$ or $L_2(8)$.

\vspace{1ex}

Finally, let $S$ be the alternating group $A_n$ with $n\geq 5$.  Then ${\rm Aut}(A_n)=S_n$ if $n\neq 6$.
 The conclusion is true for $n=5,6$ or $7$ by the GAP libray \cite{GAP}.
So we may assume that $n>7$ in the following.
The complex irreducible characters of the symmetric group $S_n$ are naturally labeled by
the partitions of $n$.
Let Irr$(S_n)=\{[\lambda]\mid \lambda\vdash n\}$,
where $\lambda \vdash n$ denotes a partition of $n$. Notice that the degree of $[\lambda]$ is $\frac{n!}{\prod_{i,j}h_{i,j}}$, where
$h_{i,j}$ is the $(i,j)$-hook number, and that the restriction
$[\lambda]_{A_n}$ of $[\lambda]\in$ Irr$(S_n)$ to $A_n$ is irreducible
if and only if $\lambda$ is not self-conjugate. See \cite{James}.

\vspace{1ex}

Let $\lambda_1=(n-1,1), \lambda_2=(n-2, 1^{2})$ and $\lambda_3=(n-3, 1^3)$  so that
the degrees $d_1,d_2$ and $d_3$  of $[\lambda_1], [\lambda_2]$ and $[\lambda_3]$
are  $n-1$, $\frac{(n-1)(n-2)}{2}$ and $\frac{(n-1)(n-2)(n-3)}{6}$, respectively.
Clearly, $\lambda_i$ is not self-conjugate for all $1\leq i\leq 3$ and $n>7$,
and so $\{d_1,d_2,d_3\}\subset {\rm cd}(S)$.

\vspace{1ex}

If $n$ is even, then $\frac{n-2}{2}$ is an integer. Clearly, $n-1$ and $\frac{n-2}{2}$ are coprime to each other.
Hence $$|\pi(d_1)|=|\pi(n-1)|< |\pi(\frac{(n-1)(n-2)}{2})|=|\pi(d_2)|.$$
Therefore, if $G$ is a SNPD-group then
$n$ is odd. Furthermore,
since for $4\mid (n-1)$,  $$|\pi(d_1)|=|\pi(n-1)|= |\pi(\frac{n-1}{2})|< |\pi(\frac{n-1}{2}\cdot (n-2))|=|\pi(d_2)|,$$
it follows that $\frac{n-1}{2}$ is odd and that $n-2$ is a prime power.

\vspace{1ex}

 If $3\mid (n-1)$ then we let $\lambda_4=(n-3,3)$ so that $d_4:= [\lambda_4](1)=\frac{n(n-1)(n-5)}{6}$.
 It is clear that $n$ and $\frac{n-1}{3}$ are coprime.
Since  $\frac{n-1}{2}$ and $\frac{n-5}{2}$ are adjacent odd numbers, it follows that
$2\cdot \frac{n-1}{2}$ and $\frac{n-5}{2}$ are  coprime.  Hence
$\frac{n-1}{3}$ and $\frac{n-5}{2}$ are coprime.
In addition, since $n-(2\cdot \frac{n-5}{2})=5$, we have that
$n\cdot \frac{n-5}{2}$ can not be a prime power (since otherwise both $n$ and $\frac{n-5}{2}$ are powers of 5, which is not possible). Therefore,
$$
|\pi(n-1)|<|\pi(\frac{n-1}{3}\cdot n\cdot \frac{n-5}{2})|=|\pi(d_4)|.$$

If $3\mid (n-2)$, then $n-2$ is a power of 3, and so
$$
|\pi(n-1)|<
|\pi((n-1)\cdot \frac{n-2}{3}\cdot \frac{n-3}{2})|=|\pi(d_3)|.$$

Finally, if $3\mid (n-3)$, then
$$
|\pi(n-1)|<
|\pi((n-1)\cdot (n-2)\cdot \frac{n-3}{6})|=|\pi(d_3)|.$$
This finishes the proof.
\end{proof}

In the following we prove Theorem \ref{thm:non-solvable-main}, starting with a result of
Bianchi, Chillag, Lewis and Pacifici.

\begin{lem}\label{lem:extend} Let $N$ be a minimal normal subgroup of $G$ such that $N=S_1 \times \cdots \times S_t$,
 where $S_i \cong S$, a nonabelian simple group.
 If $\sigma \in \operatorname{Irr}(S)$ extends to ${\rm Aut}(S)$,
 then $\sigma \times  \cdots \times \sigma \in \operatorname{Irr}(N)$ extends to $G$.
\end{lem}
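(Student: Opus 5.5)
We prove this lemma in two steps: first extend $\theta=\sigma\times\cdots\times\sigma$ to its stabilizer, then observe that this stabilizer is all of $G$. The natural route is Mattarei's tensor-induction construction, used by Bianchi, Chillag, Lewis and Pacifici.

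Put $N=S_1\times\cdots\times S_t$. Conjugation gives a homomorphism $G\to{\rm Aut}(N)\cong{\rm Aut}(S)\wr S_t$. Every automorphism of $N$ permutes the factors $S_i$, so it is enough to extend $\theta$ to $\Gamma={\rm Aut}(S)\wr S_t$. The extension can then be restricted to the image of $G$ and pulled back to $G$. The pulled-back function is a class function on $G$ extending $\theta$. Because $C_G(N)\cap N=1$, the standard embedding $G/C_G(N)\hookrightarrow{\rm Aut}(N)$ controls everything. We realize the extension inside ${\rm Aut}(N)$ and then inflate along $G\to G/C_G(N)$. Since $N\cong NC_G(N)/C_G(N)$ and $C_G(N)$ acts trivially, inflation preserves both irreducibility and the restriction to $N$.

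To extend $\theta$ to $\Gamma$, let $\hat\sigma\in{\rm Irr}({\rm Aut}(S))$ extend $\sigma$, which exists by hypothesis. Let $\rho$ be a representation affording $\hat\sigma$ on a space $V$. On $V^{\otimes t}$ we let the base group ${\rm Aut}(S)^t$ act by $\rho\otimes\cdots\otimes\rho$. We let $S_t$ act by permuting the tensor factors, and this action carries no sign twist. The main thing to verify is that these two actions are compatible with the wreath relations. Concretely, for $\pi\in S_t$ and $(a_1,\ldots,a_t)$ in the base, permuting the tensor factors conjugates $\rho(a_1)\otimes\cdots\otimes\rho(a_t)$ into $\rho(a_{\pi^{-1}(1)})\otimes\cdots\otimes\rho(a_{\pi^{-1}(t)})$. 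This is exactly the multiplication rule in the wreath product, so we obtain a representation of $\Gamma$. This is the tensor-induced representation $\hat\sigma^{\otimes\Gamma}$.

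Its restriction to $N=S^t$ affords $\sigma\otimes\cdots\otimes\sigma=\theta$. Since $\theta$ is irreducible, the representation of $\Gamma$ is irreducible as well. Restricting it to the image of $G$ and inflating to $G$ gives the required extension of $\theta$.

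The step that needs real care is the claim that $G$ acts on $N$ through the full wreath product with the right indexing. Minimality of $N$ makes $G$ permute the $S_i$ transitively, but we do not actually need transitivity. We only need that ${\rm Aut}(N)$ equals ${\rm Aut}(S)\wr S_t$ after fixing isomorphisms $S_i\cong S$. This holds because the $S_i$ are exactly the minimal normal subgroups of $N$, so every automorphism of $N$ permutes them.

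With this identification fixed, the remaining verification is that permuting the tensor factors is compatible with the base action. This is the routine wreath-product computation described above.
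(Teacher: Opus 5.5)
Your argument is correct, and it is essentially the standard tensor-induction proof of Lemma 5 of Bianchi, Chillag, Lewis and Pacifici, which the paper cites without giving an argument of its own. The steps are: map $G/C_G(N)$ into ${\rm Aut}(N)\cong{\rm Aut}(S)\wr S_t$, tensor-induce an extension $\hat\sigma$ of $\sigma$ to the wreath product, then restrict and inflate back to $G$. Two small remarks: your opening sentence about first extending to the stabilizer does not match what you actually do, and, as you observe, the argument uses only normality of $N$, not minimality.
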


\begin{proof} This is \cite[Lemma 5]{BCLP07}.
 \end{proof}

The following results about the decomposition of abelian $p$-groups
might be well known. However, we add their proofs for the purpose of completeness.

\begin{lem} \label{lem:abelian-p-group} Let $P$ be an abelian $p$-group and $x\in P$ of order $p$, and $y\in P$ an element of
maximal order such that $\langle y\rangle$ contains $x$. Then
$\langle y\rangle$ is a direct factor of $P$.
\end{lem}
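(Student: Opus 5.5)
The plan is to show that $\langle y\rangle$ is a \emph{pure} subgroup of $P$, and then use the standard fact that a pure subgroup of bounded exponent of an abelian group is a direct summand. The hypothesis that $y$ has maximal order only among elements whose cyclic subgroup contains $x$ enters only through the purity step.

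\textbf{Setup.} Let $|y|=p^n$. Since $x\in\langle y\rangle$ has order $p$, we have $\langle x\rangle=\Omega_1(\langle y\rangle)=\langle y^{p^{n-1}}\rangle$. So for every $w\in P$ of order $p^r\ge p$, the condition $x\in\langle w\rangle$ is equivalent to $\langle w^{p^{r-1}}\rangle=\langle x\rangle$. By hypothesis, any such $w$ has $r\le n$.

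\textbf{Purity.} Purity means $\langle y\rangle\cap P^{p^k}=\langle y\rangle^{p^k}$ for all $k\ge 0$. Suppose, to the contrary, that $y^m=w^{p^k}$ with $y^m\neq 1$ and $y^m\notin\langle y^{p^k}\rangle$. Write $m=p^s u$ with $p\nmid u$; then $s<k$ and $s<n$.
\begin{itemize}
\item The element $y^m$ has order $p^{n-s}$, so $w$ has order $p^{n-s+k}>p^n$.
\item We also have $w^{p^{k}p^{n-s-1}}=y^{u p^{n-1}}$, which generates $\langle x\rangle$. So $x\in\langle w\rangle$.
\end{itemize}
This contradicts the maximality of $|y|$. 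Hence $\langle y\rangle$ is pure in $P$.

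\textbf{From purity to a direct factor.} This is the step I expect to be the main obstacle, if it is to be made self-contained rather than quoted. I would prove it by induction on $|P|$, using the quotient $\bar P=P/\langle x\rangle$.

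First, $\langle\bar y\rangle$ is pure in $\bar P$. Suppose $\bar y^{m}=\bar w^{p^k}$, so $y^m=w^{p^k}x^j$. Write $x^j=y^{p^{n-1}t}$ and move it to the left-hand side. Purity of $\langle y\rangle$ in $P$ then gives $y^{m-p^{n-1}t}=v^{p^k}$ with $v\in\langle y\rangle$. We can assume $k\le n-1$, since otherwise $\bar y^m$ is already trivial. Then $\bar y^m=\bar v^{p^k}\in\langle\bar y\rangle^{p^k}$.

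Next, choose $\bar x_1\in\langle\bar y\rangle$ of order $p$, if $n\ge2$; if $n=1$ the claim is easy, as explained below. Then $\bar y$ has maximal order among elements $\bar w$ with $\bar x_1\in\langle\bar w\rangle$. Indeed, a longer such $\bar w$ would give $\bar y^{p^{n-2}}\in\bar P^{p^{r}}$ with $r\ge n-1$, which violates purity of $\langle\bar y\rangle$. So by induction $\bar P=\langle\bar y\rangle\times\bar K$ for some $K\ge\langle x\rangle$, and hence $P=\langle y\rangle K$ with $\langle y\rangle\cap K=\langle x\rangle$.

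Finally, I remove the overlap $\langle x\rangle$. Purity gives $\langle x\rangle\cap K^{p}\subseteq\langle y\rangle\cap P^p\cap K$. An element of $K^p$ lying in $\langle x\rangle$ is of the form $k^p=y^{p^{n-1}t}$. I adjust $k$ by a suitable power of $y$ to obtain a complement $H\le K$ with $K=\langle x\rangle\times H$; this is the usual splitting of an order-$p$ subgroup of height zero in $K$. Then $P=\langle y\rangle\times H$.

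For $n=1$, the subgroup $\langle x\rangle$ has height $0$, by purity. So $x\notin P^p$, and a complement of $\langle x\rangle$ exists: take a maximal subgroup of $P$ that contains $P^p$ but not $x$.

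If this inductive bookkeeping becomes too messy, I would instead cite Kulikov's theorem, which states that a bounded pure subgroup of an abelian group is a direct summand. This reduces the lemma entirely to the purity step, which is where the hypothesis on $y$ is genuinely used.
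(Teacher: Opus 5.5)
Your purity step is correct, and it is exactly where the hypothesis on $y$ is used. The passage to $\bar P=P/\langle x\rangle$, the purity of $\langle\bar y\rangle$ there, and your check that $\bar y$ again satisfies the hypothesis are also fine.

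\textbf{The gap.} It is in the last step, ``removing the overlap.'' You claim that the preimage $K$ of the inductively obtained complement $\bar K$ splits as $K=\langle x\rangle\times H$ with $H\le K$, i.e.\ that $x$ has height $0$ in $K$. This is false in general. The inclusion $\langle x\rangle\cap K^p\subseteq\langle y\rangle\cap P^p\cap K$ you invoke gives nothing, because for $n\ge 2$ already $x\in\langle y\rangle\cap P^p$.

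Here is a counterexample. Take $P=\langle y\rangle\times\langle b\rangle$ with $o(y)=p^2$, $o(b)=p$ and $x=y^p$. Then $\bar K=\langle\bar y\bar b\rangle$ is a legitimate complement to $\langle\bar y\rangle$ in $\bar P\cong C_p\times C_p$. But its preimage is $K=\langle yb\rangle\cong C_{p^2}$ with $(yb)^p=x$, and in $K$ the subgroup $\langle x\rangle$ is not a direct factor. The ``adjusted'' element $(yb)y^{-1}=b$ is the right one, but it no longer lies in $K$. In general the complement to $\langle y\rangle$ cannot be found inside $K$; you must change $K$.

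\textbf{A repair.} Write $\bar K=\langle\bar k_1\rangle\times\cdots\times\langle\bar k_r\rangle$ with $o(\bar k_i)=p^{e_i}$ and $k_i^{p^{e_i}}=y^{c_ip^{n-1}}$.
\begin{enumerate}
\item[(i)] If this element is nontrivial, purity puts it in $\langle y\rangle\cap P^{p^{e_i}}=\langle y^{p^{e_i}}\rangle$, which forces $e_i\le n-1$. Set $h_i=k_iy^{-c_ip^{n-1-e_i}}$; otherwise set $h_i=k_i$.
\item[(ii)] Then $h_i^{p^{e_i}}=1$, so $H=\langle h_1,\dots,h_r\rangle$ has order at most $|\bar K|=|P|/p^n$.
\item[(iii)] On the other hand $\langle y\rangle H\supseteq\langle y\rangle K=P$. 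Comparing orders gives $\langle y\rangle\cap H=1$, hence $P=\langle y\rangle\times H$.
\end{enumerate}
Your fallback of citing the theorem that a bounded pure subgroup is a direct summand is also legitimate, and with it the proof is complete.

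\textbf{Comparison with the paper.} The paper avoids the overlap by factoring out an order-$p$ subgroup $Q\neq\langle x\rangle$ instead. Then $Q\cap\langle y\rangle=1$, and the preimage of a complement is directly a complement. The price is that $\bar y$ must remain maximal in $P/Q$. The paper does not check this, and it fails for arbitrary $Q$: in $\langle a\rangle\times\langle b\rangle$ with $o(a)=p^2$, $o(b)=p$, $x=y=b$, the choice $Q=\langle a^pb\rangle$ makes $P/Q$ cyclic of order $p^2$. Maximality does hold if $Q$ is chosen inside $P^{p^n}$ whenever $P^{p^n}\neq 1$, by the same height computation as in your purity step.
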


\begin{proof} If $P$ has only one subgroup of order $p$, then $P$ is cyclic, so the result trivially holds.
So we may assume that $P$ has at least two subgroups of order $p$. Let $Q$ be one of these subgroups different
from $\langle x\rangle$. Write $\overline{P}=P/Q$. We have $o(\overline{x})=p$, $o(\overline{y})=o(y)$, and
$\langle\overline{y} \rangle = \overline{\langle y \rangle}$.
By the inductive hypothesis, $\langle\overline{y} \rangle$ is a direct factor of $\overline{P}$,
i.e., $\overline{P}=\langle\overline{y} \rangle \times \overline{H}$ for some $\overline{H}\leq \overline{P}$.
Let $H$ be the pre-image of $\overline{H}$ under the natural epimorphism from $P$ to $\overline{P}$, so that
$(\langle y \rangle Q)\cap H=Q$.
Now $P=\langle y \rangle QH=\langle y \rangle H=\langle y \rangle\times H$, finishing the proof.
\end{proof}

\begin{lem} \label{lem:abelian-p-group-2} Let $P$ be an abelian $p$-group,
$Q$ be a maximal subgroup of $P$, and $a\in P\backslash Q$ be of maximal order
among $P\backslash Q$. Then $\langle a\rangle$ is a direct factor of $P$.
\end{lem}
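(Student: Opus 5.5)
The plan is to deduce this from Lemma \ref{lem:abelian-p-group}, applied to a suitable element of order $p$ in $\langle a\rangle$. Let $o(a)=p^m$ and $x=a^{p^{m-1}}$, so $x$ has order $p$ and lies in $\langle a\rangle$. Lemma \ref{lem:abelian-p-group} needs an element of \emph{maximal order} among all elements $y$ of $P$ with $x\in\langle y\rangle$. I will show that $a$ is such an element; then Lemma \ref{lem:abelian-p-group} gives that $\langle a\rangle$ is a direct factor of $P$ at once.

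So suppose $y\in P$ satisfies $x\in\langle y\rangle$ and $o(y)=p^k$ with $k>m$. We must derive a contradiction. If $y\notin Q$, then $o(y)>o(a)$ contradicts the choice of $a$ as an element of maximal order in $P\setminus Q$. Hence $y\in Q$. Now look at $ay$. Since $a\notin Q$ and $y\in Q$, we have $ay\notin Q$. Because $P$ is abelian and $o(a)<o(y)$, the element $(ay)^{p^{k-1}}=a^{p^{k-1}}y^{p^{k-1}}$ equals $y^{p^{k-1}}\neq 1$, as $p^{k-1}\ge p^m$. So $o(ay)=p^k>p^m$, again contradicting the maximality of $o(a)$ in $P\setminus Q$. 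Therefore every $y$ with $x\in\langle y\rangle$ has $o(y)\le o(a)$. Since $x\in\langle a\rangle$ itself, $a$ is of maximal order among such elements.

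With this, Lemma \ref{lem:abelian-p-group} applies to $x$ and $y=a$ and finishes the proof. In fact the hypothesis $x\in\langle y\rangle$ was not used in the contradiction: the argument shows that $a$ has maximal order in all of $P$. This is even cleaner, since Lemma \ref{lem:abelian-p-group} applied to $x$ and $a$ then needs nothing more.

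The only step with real content is the order computation for $ay$. It uses that $P$ is abelian and that orders are powers of $p$, so that the larger order dominates the product. I expect no obstacle beyond checking that $ay\notin Q$, which holds because $Q$ is a subgroup, $y\in Q$ and $a\notin Q$.
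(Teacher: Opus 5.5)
Your proof is correct and is essentially the paper's argument. The paper also shows that $a$ has maximal order in all of $P$: an element of larger order would have to lie in $Q$, and multiplying it by $a$ gives an element of $P\setminus Q$ of larger order, and you supply the order computation for this that the paper leaves implicit. It then concludes from the fact that an element of maximal order in an abelian $p$-group generates a direct factor, which is exactly what your appeal to Lemma~\ref{lem:abelian-p-group} amounts to. Your observation that $o(a)$ is maximal in all of $P$ is the right way to invoke that lemma. It makes the lemma applicable under either reading of its hypothesis, and the inductive step in its proof (passing to $P/Q$) is only guaranteed to preserve the hypothesis when $y$ has maximal order in all of $P$.
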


\begin{proof} We first claim that $a$ is of maximal order in the whole group $P$.
Assume that $b$ has order greater than $o(a)$. Then we have $b\in Q$. However,
it follows that $ab\in P\backslash Q$ with $o(ab)>o(a)$. This contradicts
the maximality of the order of $a$ among  $P\backslash Q$, proving the claim.
Now the lemma follows by
the structural theorem of abelian $p$-groups.
 \end{proof}

\begin{lem} \label{lem:A7} Let $N$ be a normal solvable subgroup of $G$ such that $G/N\cong A_7$ or $S_7$.
If $G$ is a SNPD-group, then $G\cong A\times B$,
where $A$ is an abelian group and $B$ satisfies one of the following:
\begin{enumerate}
  \item[$(1)$]  $B=A_7$ or $S_7$.
  \item[$(2)$]  $B$ is  the central product of a cyclic $3$-group with $3.A_7$.
  \item[$(3)$] $B$ is the semi-direct product of $A_7$ by a cyclic $2$-group $\langle a\rangle$ such that
$B/\langle a^2\rangle \cong S_7$.
\end{enumerate}
\end{lem}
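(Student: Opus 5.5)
Every non-linear degree of $A_7$ and $S_7$ has exactly two prime divisors (Table \ref{tab:small}). So if $G$ is a SNPD-group with $G/N\cong A_7$ or $S_7$, every non-linear $\chi\in{\rm Irr}(G)$ satisfies $|\pi(\chi(1))|=2$. The plan is to use this rigidity to show first that $N$ is abelian, then central (or nearly so), and finally to identify the extension.

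\textbf{Step 1: $N$ is abelian.} Suppose not. Pass to a chief factor $K/L$ of $G$ inside $N'$ with $L$ chosen so that $N/L$ is non-abelian, and work in $\bar G=G/L$. I would pick a non-linear $\theta\in{\rm Irr}(N/L)$, so $\theta(1)>1$ is a prime power or has $\ge 2$ prime factors. Let $\chi$ lie over $\theta$; then $\chi(1)/\theta(1)$ divides $|G:N|$. I would use Gallagher's theorem in the inertia group, together with the fact that $A_7$ has a degree-$35$ character extending to $S_7$ (and degree $6$ as well), to produce characters $\chi$ over $\theta$ with $|\pi(\chi(1))|\ge 3$. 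Concretely, if $\theta$ is $G$-invariant, then its extension or projective extension multiplied by characters of $G/N$ of degrees $6,35,15,\dots$ gives $\theta(1)\cdot d$ for several $d$. Since $\{6,10,14,15,21,35\}$ already covers all pairs from $\{2,3,5,7\}$, one of these products has at least three primes. The projective case is handled via ${\rm cd}$ of $2.A_7$, $3.A_7$, $6.A_7$, which are known. If $\theta$ is not invariant, the index $|G:I_G(\theta)|$ is the index of a proper subgroup of $A_7$ or $S_7$ modulo $N$, and these indices ($7,15,21,35,\dots$) combine with further degrees to give a contradiction. This step is the main obstacle: controlling the non-invariant case requires the list of maximal subgroups of $A_7$ and $S_7$, and checking that $\theta(1)$ times some degree of the inertia quotient always has three primes.

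\textbf{Step 2: $N$ is central, up to a $2$-part in the $S_7$ case.} With $N$ abelian, let $\lambda\in{\rm Irr}(N)$ be non-trivial. If $I=I_G(\lambda)<G$, then $\chi(1)$ is divisible by $|G:I|$, a subgroup index of $A_7$ or $S_7$ modulo $N$. Taking $\chi$ over $\lambda$ with an extra factor from ${\rm Irr}(I\mid\lambda)$ yields three primes, or yields degrees whose prime count differs. For example, an index of $7$ multiplied by a degree divisible by $2,3$ or $5$ from the inertia quotient gives a contradiction. Hence every $\lambda$ is $G$-invariant, so $[N,G']$ is trivial. For $G/N\cong A_7$ this gives $N\le Z(G)$. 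For $S_7$, it gives $N\le Z(G')$, and I expect the only allowed non-central action to come from $G/G'$ acting through a sign, producing case (3).

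\textbf{Step 3: identify the extension.} Now $G'$ is a perfect central extension of $A_7$, so it is $A_7$, $2.A_7$, $3.A_7$ or $6.A_7$. The faithful degrees of $2.A_7$ include $4$ and $20$, which have $1$ and $2$ primes respectively. So $2.A_7$ and $6.A_7$ are excluded. The faithful degrees of $3.A_7$ are $6,15,21,24$, all with two primes, so $3.A_7$ survives. Write $G=G'C_G(G')$ and use Lemmas \ref{lem:abelian-p-group} and \ref{lem:abelian-p-group-2} on the Sylow subgroups of the abelian group $C_G(G')$ to split off a direct abelian factor. In the $3.A_7$ case, Lemma \ref{lem:abelian-p-group} with $x$ a generator of $Z(3.A_7)$ gives the cyclic $3$-group of case (2). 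In the $S_7$ case, Lemma \ref{lem:abelian-p-group-2} with $Q$ the index-$2$ subgroup of the Sylow $2$-subgroup of $G/G'$ gives the cyclic $2$-group $\langle a\rangle$ of case (3). Here $S_7$ itself arises when $a^2=1$, and $3.S_7$-type extensions are excluded because the sign twist inverts the centre and yields a character of degree $42$, which has three primes.
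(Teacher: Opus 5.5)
\textbf{The gap is in Step 1, and it cannot be repaired, because the statement is false in the case $G/N\cong A_7$.} The failure is at your claim that the projective case ``is handled via ${\rm cd}$ of $2.A_7$, $3.A_7$, $6.A_7$''. Suppose $\theta\in{\rm Irr}(N)$ is $G$-invariant and the character triple $(G,N,\theta)$ is of $3.A_7$-type. Then ${\rm cd}(G\mid\theta)=\theta(1)\cdot\{6,15,21,24\}$. These multipliers all contain the prime $3$, so they do not cover all pairs from $\{2,3,5,7\}$. If $\theta(1)$ is a power of $3$, every degree over $\theta$ has exactly two prime divisors, and nothing else rules this out.

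Here is a counterexample. Let $E=3^{1+2}_+$ and let $G=E\circ 3.A_7$ be the central product identifying $Z(E)$ with $Z(3.A_7)$. The irreducible characters of $G$ are the products $\alpha\beta$ with $\alpha\in{\rm Irr}(E)$ and $\beta\in{\rm Irr}(3.A_7)$ having matching central characters, so
\[ {\rm cd}(G)=\{1,6,10,14,15,21,35\}\cup 3\cdot\{6,15,21,24\}=\{1,6,10,14,15,18,21,35,45,63,72\}. \]
Every non-linear degree here has exactly two prime divisors. So $G$ is a SNPD-group with $E$ solvable and $G/E\cong A_7$. However, its solvable radical $E$ is non-abelian. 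Every group $A\times B$ in (1)--(3) has abelian solvable radical, namely $A$, $A\times C_{3^k}$ or $A\times\langle a^2\rangle$. Hence $N$ need not be abelian, and no argument can complete your Step 1 when $G/N\cong A_7$. The same construction works with any extraspecial $3$-group in place of $3^{1+2}_+$. Your own remark in Step 3, that the faithful degrees $6,15,21,24$ of $3.A_7$ all have two primes, should have flagged this, since multiplying them by $3^k$ changes nothing.

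The paper's proof has the identical hole. Parts (i) and (ii) show that every $\theta\in{\rm Irr}(N)$ is $G$-invariant, which is true in the example since $3.A_7$ centralizes $E$. The paper then lists the four possible sets ${\rm cd}(G\mid\theta)$, excludes the $2.A_7$-type, and asserts ``It is clear that all irreducible characters of $N$ have degree $1$''. That assertion fails for $3.A_7$-type triples with $\theta(1)$ a power of $3$.

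For $G/N\cong S_7$ this particular configuration does not arise. The class of a $G$-invariant triple lies in $H^2(S_7,\mathbb{C}^\times)\cong C_2$, so a $\theta$ of $3.A_7$-type in $M$ cannot be $G$-invariant. Its degrees therefore double, producing $30\theta(1)$, which has at least three primes.

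Two smaller points. In Step 2, $G$-invariance of every $\lambda\in{\rm Irr}(N)$ already gives $N\le Z(G)$; even in case (3), $N=A\times\langle a^2\rangle$ is central, so there is no exceptional ``$2$-part''. In Step 3, $G=G'C_G(G')$ fails when $G/N\cong S_7$. There you first need $G/L$ to be abelian, which the paper obtains from $2\notin{\rm cd}(G)$.
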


\begin{proof} Let $\theta\in {\rm Irr}(N)$ and $T$ the stabilizer of $\theta$ in $G$.

We first suppose that $G/N\cong A_7$. According to the Atlas \cite{Atlas}, the maximal subgroups of $A_7$
up to isomorphism are
$L_2(7), S_5, A_6$ or $\left(A_4 \times C_3\right): C_2$ of index $3\cdot 5, 3\cdot 7, 7$ or $5\cdot 7$, respectively (see also Table \ref{tb: max_A7}).

\begin{table}
\begin{center}
\caption{Maximal subgroups of $A_7$ and their indices} \label{tb: max_A7}
\begin{tabular}{|c|c|c|c|c|} \hline
  {\rm Maximal subgroup} &  $L_2(7)$ & $S_5$  & $A_6$  & $\left(A_4 \times C_3\right): C_2$    \\ \hline
   {\rm Index}     &     $3\cdot 5$ & $3\cdot 7$ &$7$  & $5\cdot 7$ \\ \hline
\end{tabular}
\end{center}
\end{table}

\vspace{1ex}

(i) Suppose that $T<G$ and that $T$ is not a maximal subgroup of $G$. Let $M$ be a maximal subgroup of $G$
containing $T$. If $M/N\cong L_2(7)$ then since $L_2(7)$ only has maximal subgroups of index $7$ or $8$,
it follows that $|G: T|$ is divisible by at least 3 different prime divisors.
The conclusion is also true for
$M/N\cong S_5$ or $A_6$ where $S_5$ (resp. $A_6$) only has maximal subgroups of index $2, 5, 2\cdot 3$ or $2\cdot 5$
(resp. $2\cdot 3, 2\cdot 5$ or $3\cdot 5$).
Now let $M/N\cong \left(A_4 \times C_3\right): C_2$. If $2\mid |M:T|$ then
$|G: T|$ is divisible by at least 3 different prime divisors. For the case where  $2\nmid |M:T|$, we have that $T/N$
is a Sylow 2-subgroup of $M/N$ and is isomorphic to $D_8$. So there is  $\widetilde{\theta}\in {\rm Irr(T\mid \theta)}$
with $2\theta(1)\mid \widetilde{\theta}(1)$, and thus there is a
 $\chi\in {\rm Irr(G\mid \theta)}$ such that $\chi(1)$
 is divisible by at least 3 different prime divisors.
 Therefore, in any case $G$
has an irreducible character of degree with at least 3 different prime divisors,
contradicting $2\cdot 3\in {\rm cd}(G)$ and that $G$ is a  SNPD-group.

\vspace{1ex}

(ii) We now suppose that $T$ is a maximal subgroup of $G$.
If $T/N \cong L_2(7)$, then using the Atlas \cite{Atlas},
either $\operatorname{cd}(T \mid \theta)=\{1,3,2 \cdot \left.3,7,2^3\right\} \cdot \theta(1)$ or $\left\{2^2, 2 \cdot 3,2^3\right\} \cdot \theta(1)$
(where $\left\{2^2, 2 \cdot 3,2^3\right\} \cdot \theta(1):=\left\{2^2\theta(1), 2 \cdot 3\theta(1),2^3\theta(1) \right\}$).
Both cases implies that $G$ is not a SNPD-group, a contradiction.

\vspace{1ex}

If $T/N\cong S_5$, we have either $\operatorname{cd}(T \mid \theta)=\left\{1,2^2, 5,2 \cdot 3\right\} \cdot \theta(1)$
or $\left\{2^2, 2 \cdot 3\right\} \cdot \theta(1)$ by the Atlas \cite{Atlas}.
Also, both cases imply that $G$ is not a SNPD-group, a contradiction.

\vspace{1ex}

If $T/N\cong A_6$, then
$\operatorname{cd}(T \mid \theta)$ is one of the four sets
$
\left\{1,5,2^3, 3^2, 2 \cdot 5\right\} \cdot \theta(1),
\left\{2^2, 2^3, 2 \cdot 5\right\} \cdot \theta(1),
\left\{3,2 \cdot 3,3^2, 3 \cdot 5\right\} \cdot \theta(1)$
or $\left\{2 \cdot 3,2^2 \cdot 3\right\} \cdot \theta(1)$ by the Atlas [3].
The former three cases obviously lead to a contradiction.
For the last case, since $|G:T|=7$ it follows that $G$
has an irreducible character of degree with at least 3 different prime divisors,
contradicting $2\cdot 3\in {\rm cd}(G)$ and that $G$ is a SNPD-group.

\vspace{1ex}

Finally, if $T/N\cong\left(A_4 \times C_3\right): C_2$, then $|G: T|=5 \cdot 7$ and $T$ contains a normal subgroup $N_1$ of $G$
such that $T/N_1 \cong S_4$. Consider a character $\widetilde{\theta} \in \operatorname{Irr}\left(N_1 \mid \theta\right)$.
Then $\operatorname{cd}(T \mid \widetilde{\theta})=\{1,2,3\} \cdot \widetilde{\theta}(1)$
or $\left\{2,2^2\right\} \cdot \widetilde{\theta}(1)$. In any case,
$G$ has an irreducible character of degree with at least 3 different prime divisors,
contradicting $2\cdot 3\in {\rm cd}(G)$ and that $G$ is a SNPD-group.

\vspace{1ex}

Now we have proved that all irreducible characters of $N$ are $G$-invariant.
According to the Atlas \cite{Atlas}, the Schur multiplier of $A_7$ is $6$ and
${\rm cd}(G)$ contains the subset
$$\{1,2\cdot 3, 2\cdot 5, 2\cdot 7, 3\cdot5, 3\cdot 7, 5\cdot 7\}\cdot \theta(1),
\{2^2, 2\cdot 7, 2^2\cdot 5, 2^2\cdot 3^2\}\cdot \theta(1),$$
$$\{2\cdot 3, 3\cdot 5, 3\cdot 7, 2^3\cdot 3\}\cdot \theta(1), {\rm or}
\ \{2\cdot 3, 2^3\cdot 3,  2^2\cdot 3^2 \}\cdot \theta(1).$$
Since $G$ is a SNPD-group, we deduce that the second possibility above can not occur,
and so $2.A_7$ and $6.A_7$ is not a quotient group of $G$.
It is clear that all irreducible characters of $N$ have degree $1$,
i.e., $N$ is abelian.
In addition, by a theorem of Brauer \cite[Theorem 6.32]{I},  we have $N\leq Z(G)$.
Now, let $E:=[G,G]$. Then $E\cong A_7$ or $3.A_7$.
If the former case occurs, then $G\cong A_7\times N$.
For the latter case, we have $G\cong EP\times H$,
where $H$ is a 3-complement of $N$, $P$ is a Sylow 3-subgroup of $N$,
and $EP$
is the central product of $E$ and $P$ with $|E\cap P|=3$.
Let $y\in P$ be an element of
maximal order such that $\langle y\rangle$ contains $E\cap P$.
By Lemma \ref{lem:abelian-p-group}, $P=\langle y\rangle\times R$
for some subgroup $R$ of $P$.
Therefore, $$G=EP\times H=E(\langle y\rangle\times R)\times H=(E\langle y\rangle)\times (R\times H),$$
and thus (2) holds (with $A=R\times H$ and $B=E\langle y\rangle$).

\vspace{1ex}

Finally, we suppose that $G/N\cong S_7$. The character degrees
of $S_7, 2.S_7, 3.S_7, 6.S_7$ can be found in the  Atlas \cite{Atlas},
see also Table \ref{tb:2.S7}.

{\small
\begin{table}
\begin{center}
\caption{Character degrees of $S_7$ and its central extensions} \label{tb:2.S7}
\begin{tabular}{cl} \hline
  {\rm Group} &  {\rm Character dergees}     \\ \hline
  $S_7$     &  $1,2\cdot 3,2\cdot 7,3\cdot 5,2^2\cdot 5,3\cdot 7,5\cdot 7$  \\
  $2.S_7$     & $1,2\cdot 3,2^3,2\cdot 7,3\cdot 5,2^2\cdot 5,3\cdot 7,2^2\cdot 7,5\cdot 7,2^2\cdot 3^2$ \\
   $3.S_7$     &  $1,2\cdot 3,2^2\cdot 3,2\cdot 7,3\cdot 5,2^2\cdot 5,3\cdot 7,2\cdot 3\cdot 5,5\cdot 7,2\cdot 3\cdot 7,2^4\cdot 3$\\
   $6.S_7$     &   $1,2\cdot 3,2^3,2^2\cdot 3,2\cdot 7,3\cdot 5,2^2\cdot 5,3\cdot 7,2^2\cdot 7,2\cdot 3\cdot 5,5\cdot 7,2^2\cdot 3^2,2\cdot 3\cdot 7,2^4\cdot 3,2^3\cdot 3^2$ \\
 \hline
\end{tabular}
\end{center}
\end{table}
}
\vspace{1ex}

Let $M/N$ be the socle of $G/N$ so that $M/N\cong A_7$ and $|G:M|=2$.
With the previous conclusion on $\operatorname{cd}(M \mid \theta)$
and Clifford's theory, it is easy to see that $\theta$
is $M$-invariant, and so $\theta(1)=1$ and all irreducible characters of $N$ are $M$-invariant.
Hence $N\leq Z(M)$.
Since $\{2\cdot 3,2^3\}\subset {\rm cd}(2.S_7)\cap {\rm cd}(6.S_7)$
and $\{2\cdot 3,2\cdot 3\cdot 5\}\subset {\rm cd}(3.S_7)$, we see that
all non-trivial central extensions of $S_7$ are not quotient groups of $G$.
This implies that $M\cong A_7\times N$.

\vspace{1ex}

Let $L\lhd G$ be such that $L\cong A_7$ and $L< L_1\lhd G$ such that $L_1\cong S_7$. If $G/L$ is not abelian, then
we have $2\in {\rm cd}(G)$. But it then follows that $G$ is not a SNPD-group, a contradiction.
Hence $G/L$ is abelian, and so $N\leq Z(G)$.
Let $N_0$ be the unique Sylow $2$-subgroup of $N$, $N_{2'}$ the unique 2-complement of $N$,
and $\widehat{N_0}$ be the subgroup of $G$ such that $G=L_1N=L\widehat{N_0}\times N_{2'}$
and $|\widehat{N_0}:N_0|=2$. In particular, $N_0$ is a maximal subgroup
of $\widehat{N_0}$.
Let $a\in \widehat{N_0}\backslash N_0$ be of maximal order among
$\widehat{N_0}\backslash N_0$. By Lemma \ref{lem:abelian-p-group-2},
$\langle a\rangle$ is a direct factor of $\widehat{N_0}$, say $\widehat{N_0}=\langle a\rangle \times N_a$
with $N_a\leq \widehat{N_0}$.
Now we have
$$G=L\widehat{N_0}\times N_{2'}= L(\langle a\rangle \times N_a)\times N_{2'}=L\langle a\rangle \times (N_a\times N_{2'}),$$
and thus (3) holds (with $A=N_a\times N_{2'}$ and $B=L\langle a\rangle$).
This finishes the proof.
\end{proof}

Finally, we prove Theorem \ref{thm:non-solvable-main}.

\begin{proof}[Proof of Theorem \ref{thm:non-solvable-main}]

The ``if" part follows from Table
\ref{tab:small} and the facts that
${\rm cd}(B)={\rm cd}(S_7)$ (resp. ${\rm cd}(3.A_7)=\{1,6,10,14,15,21,24,35\}$)
if Theorem \ref{thm:non-solvable-main} (3) (resp. (2)) occurs.
For the ``only if" part, we let  $N$ be the largest normal solvable subgroup of $G$
and $M/N$ a nonabelian chief factor of $G$.
Then $M/N\cong S_1 \times \cdots \times S_t$, where $S_i \cong S$, a nonabelian simple group.
Notice that ${\rm cd}(G/N)\subseteq {\rm cd}(G)$.

\vspace{1ex}

We first suppose that $S$ is a sporadic simple group or the Tits simple group. By the Atlas \cite{Atlas} or the GAP library \cite{GAP},
$S$ has two ${\rm Aut}(S)$-extendible irreducible characters of degrees as listed in Table \ref{degrees sporadic}.
By Lemma \ref{lem:extend}, it is easy to see that $G$ is not a SNPD-group, a contradiction.

\begin{table}

{\tiny
\begin{center}
\caption{Degrees of the sporadic simple groups and the Tits group} \label{degrees sporadic}
\begin{tabular}{p{1cm}p{1cm}lllp{1cm}llll} \hline
   Group & Chars.     & Degrees     &     &                        &Group    &  Chars.         & Degrees    &      &  \\ \hline
  $M_{11}$ & $ \chi_{5}$  & $11$    & $= $ & $11$     &  $Fi_{22}$ & $\chi_2$  &  78  &$=$&   $2\cdot 3\cdot 13$ \\
  &  $ \chi_{10}$  & $55$    &$=$ & $5\cdot 11$       &       &  $\chi_6$  &  $3,003$  &$=$&   $3\cdot 7\cdot 11\cdot 13$ \\   \\

  $M_{12}$  & $\chi_2$     &  11  &$=$&  11     & $Fi_{23}$ & $\chi_2$  &  782  &$=$&   $2\cdot 17\cdot 23$ \\
               & $\chi_{11}$  &  66  &$=$&   $2\cdot 3\cdot 11$  &   & $\chi_5$  &  $25,806$  &$=$&   $2\cdot 3\cdot 11\cdot 17\cdot 23$ \\    \\

 $M_{22}$ & $ \chi_{2}$  & $21$ &$=$ &$3\cdot7$   &  $Fi_{24}'$ & $\chi_2$  &  $8,671$  &$=$&   $13\cdot 23\cdot 29$           \\
     & $ \chi_{8}$  & $210$ &$=$ &$2\cdot 3\cdot5\cdot 7$  &  &   $\chi_4$  &  $249,458$  &$=$&   $2\cdot 11\cdot 17\cdot 23\cdot 29$\\     \\

$M_{23}$ & $ \chi_{2}$  & $22$ &$=$ &$2\cdot11$              &$McL$ & $\chi_2$  &  22  &$=$&   $2\cdot 11$ \\
           & $ \chi_{10}$  & $770$ &$=$ & $ 2\cdot5\cdot 7\cdot 11$         &     & $\chi_5$  &  770  &$=$&  $2\cdot 5\cdot 7\cdot 11$ \\   \\

$M_{24}$ & $ \chi_{2}$  & $23$ &$=$ & $23$    & $He$ & $\chi_2$  &  51  &$=$&   $3\cdot 17$     \\
              & $ \chi_{9}$  & $483$ &$=$ &$3\cdot 7\cdot 23$         & & $\chi_13$  &  $4,080$  &$=$&   $2^4\cdot 3\cdot 5\cdot 17$ \\      \\

  $J_{1}$  &  $\chi_6$  &  77  &$=$&   $7\cdot 11$       &    $Ru$ & $\chi_2$  &  378  &$=$&   $2\cdot 3^3\cdot 7$ \\
             & $\chi_9$  &  120  &$=$&   $2^3\cdot 3\cdot 5$   &            & $\chi_6$  &  $3,276$  &$=$&   $2^2\cdot 3^2\cdot 7\cdot 13$\\      \\

  $J_2$  & $\chi_2$  &  14  &$=$&   $2\cdot 7$  & $Suz$ & $\chi_2$  &  143  &$=$&   $11\cdot 13$        \\
        & $\chi_8$  &  70  &$=$&   $2\cdot 5\cdot 7$  &  & $\chi_4$  &  780  &$=$&   $2^2\cdot 3\cdot 5\cdot 13$  \\     \\

  $J_3$   & $\chi_6$  &  324  &$=$&   $2^2\cdot 3^4$   & $O'N$ & $\chi_3$  &  $13,376$  &$=$&   $2^6\cdot 11\cdot 19$ \\
         &$\chi_{10}$ &  1,140  &$=$&   $2^2\cdot 3\cdot 5\cdot 19$ &  & $\chi_{11}$  &  $52,668$  &$=$&   $2^2\cdot 3^2\cdot 7\cdot 11\cdot 19$\\     \\

  $J_4$ & $\chi_2$  &  1,333  &$=$&   $31\cdot 43$  & $HN$ & $\chi_2$  &  133  &$=$&   $7\cdot 19$  \\
         & $\chi_4$  &  $299,367$  &$=$&   $3^2\cdot 29\cdot 31\cdot 37$  &  & $\chi_6$  &  $8,778$  &$=$&   $2\cdot 3\cdot 7\cdot 11\cdot 19$ \\     \\

  $Co_1$ & $\chi_3$  &  299  &$=$&   $13\cdot 23$   & $Ly$ & $\chi_2$  &  $2,480$  &$=$&   $2^4\cdot 5\cdot 31$  \\
        & $\chi_5$  &  8,855  &$=$&   $5\cdot 7\cdot 11\cdot 23$  &  &  $\chi_5$  &  $48,174$  &$=$&   $2\cdot 3\cdot 7\cdot 31\cdot 37$\\    \\

   $Co_2$ & $\chi_2$  &  23  &$=$&   23  & $Th$ & $\chi_2$  &  248  &$=$&  $2^3\cdot 31$      \\
          & $\chi_5$  &  $1,771$  &$=$&   $7\cdot 11\cdot 23$  &   & $\chi_6$  &  $30,628$  &$=$&   $2^2\cdot 13\cdot 19\cdot 31$  \\    \\

   $Co_3$ & $\chi_2$  &  23  &$=$&   23&    $B$ & $\chi_2$  &  $4,371$  &$=$&   $3\cdot 31\cdot 47$          \\
         & $\chi_8$  &  1,771  &$=$&   $7\cdot 11\cdot 23$   && $\chi_4$  &  $1,139,374$  &$=$&   $2\cdot 17\cdot 23\cdot 31\cdot 47$   \\    \\

    $HS$  &  $\chi_2$  &  22  &$=$&   $2\cdot 11$  &  $M$ & $\chi_2$  &  $196,883$  &$=$&   $47\cdot 59\cdot 71$  \\
 &  $\chi_{10}$  &  770  &$=$&   $2\cdot 5\cdot 7\cdot 11$  &  & $\chi_3$  &  $21,296,876$  &$=$&  $2^2\cdot 31\cdot 41\cdot 59\cdot 71$   \\     \\

  ${}^2F_4(2)'$ & $ \chi_{7}$  & $300$ &$=$ & $3\cdot5^2\cdot 2^2 $            &      & &        &   &     \\
         & $ \chi_{15}$  & $675$ &$=$ &  $3^3\cdot 5^2$          &      &       &   & &       \\  \hline

\end{tabular}
\end{center}
}
\end{table}

\vspace{1ex}

We now suppose that $S$ is a simple group of Lie type.
The Steinberg character $St$ of $S$ has prime power degree and extends to ${\rm Aut}(S)$.
By Lemma \ref{lem:extend}, $St(1)^t\in {\rm cd}(G)$.
So, if $G$ is a SNPD-group then all complex irreducible characters of $G$ have prime power degrees.
By \cite[Proposition B]{Wi87}, $G$ is isomorphic to $L_2(4)$ or $L_2(8)$ up to an abelian direct factor.

\vspace{1ex}

Finally, we suppose that $S=A_n$ with $n\geq 5$.
Since $A_5\cong L_2(4)$ and $A_6\cong L_2(9)$, we may assume that $n\geq 7$.
 With the notation in the proof of Theorem \ref{thm:simple-main}, we have that the chosen characters
are all extendible to ${\rm Aut}(A_n)=S_n$ for $n>7$.
By Lemma \ref{lem:extend}, we conclude that $\{(d_1)^t,(d_2)^t,(d_3)^t, (d_4)^t\}\subset {\rm cd(G)}$.
Therefore, for $n>7$, the proof  of Theorem \ref{thm:simple-main} shows that $G$ is not a SNPD-group, a contradiction.
Let $n=7$. If $t>1$, then $M/N$ has an irreducible character of degree divisible by $2\cdot 3\cdot 5\cdot 7$.
By Clifford's theory, $G$ has an irreducible character of degree $\chi(1)$ divisible by $2\cdot 3\cdot 5\cdot 7$.
Clearly,  $2=|\pi((d_1)^t)|<|\pi(\chi(1))|$, and so $G$ is not a SNPD-group, a contradiction.
Hence $t=1$, and so $G/N\cong A_7$ or $S_7$.
Now Lemma \ref{lem:A7} applies.
\end{proof}

\vspace{2ex}

{\bf \noindent Declaration of competing interest}

\vspace{2ex}

The authors declare that they have no known competing financial interests or personal relationships that
could have appeared to influence the work reported in this paper.


\end{document}